\newtheorem{theorem}{Theorem}
\newtheorem{lemma}{Lemma}
\newtheorem{problem}{Problem}
\title{Note on semi-proper orientations of outerplanar graphs\footnote{The first arXiv version of this paper was submitted in April 2020. The only technical differences between the first arXiv version and this one
are in terminology and notation. The results are the same and were obtained independently of \cite{DH} as we became aware of \cite{DH} only in July 2020 after \cite{DH} appeared online in Discrete Applied Mathematics.}}
\author{Ruijuan Gu$^1$, Gregory Gutin$^2$, \ Yongtang Shi$^3$, Zhenyu Taoqiu$^3$  \\[2mm]
{\small $^1$ Sino-European Institute of Aviation Engineering}\\
{\small Civil Aviation University of China, Tianjin 300300, China}\\
{\small millet90@163.com}\\
{\small $^2$ Department of Computer Science}\\
{\small Royal Holloway, University of London}\\
{\small Egham, Surrey, TW20 0EX, UK
}\\
{\small g.gutin@rhul.ac.uk}\\
{\small $^3$  Center for Combinatorics and LPMC}\\
{\small Nankai University, Tianjin 300071, China}\\
{\small shi@nankai.edu.cn, tochy@mail.nankai.edu.cn}
}
\date{\today}
\begin{document}
	\maketitle
\begin{abstract}
	A \textit{semi-proper orientation} of a given graph $G$, denoted by $(D,w)$, is an orientation $D$ with a weight function $w: A(D)\rightarrow \mathbb{Z}_+$, such that the in-weight of any adjacent vertices are distinct, where the \textit{in-weight} of $v$ in $D$, denoted by $w^-_D(v)$, is the sum of the weights of arcs towards $v$. The \textit{semi-proper orientation number} of a graph $G$, denoted by $\overrightarrow{\chi}_s(G)$, is the minimum of maximum in-weight of $v$ in $D$ over all semi-proper orientation $(D,w)$ of $G$. This parameter was first introduced by Dehghan (2019). When the weights of all edges eqaul to one, this parameter is equal to the \textit{proper orientation number} of $G$. The \textit{optimal semi-proper orientation} is a semi-proper orientation $(D,w)$ such that $\max_{v\in V(G)}w_D^-(v)=\overrightarrow{\chi}_s(G)$.
	
	Ara\'ujo et al. (2016) showed that  $\overrightarrow{\chi}(G)\le 7$ for every cactus $G$ and the bound is tight. We prove that for every cactus $G$, $\overrightarrow{\chi}_s(G) \le 3$ and the bound is tight.
	Ara\'{u}jo et al. (2015) asked whether there is a constant $c$ such that  $\overrightarrow{\chi}(G)\le c$ for all outerplanar graphs $G.$ While this problem remains open, we consider it in the weighted case. We prove that for every outerplanar graph $G,$ $\overrightarrow{\chi}_s(G)\le 4$ and the bound is tight.
	
	\noindent\textbf{Keywords:} proper orientation number; semi-proper orientation number; outerplanar graph
\end{abstract}

\section{Introduction}\label{intro}
\baselineskip 17pt
For basic notation in graph theory, the reader is referred to \cite{BM}. All graphs in this paper are considered to be simple.
An \textit{orientation} $D$ of a graph $G$ is a digraph obtained from $G$ by replacing each edge by excactly one of two possible arcs with the same endvertices. The \textit{in-degree} of $v$ in $D$, denoted by $d_D^-(v)$, is the number of arcs towards $v$ in $D$ for each $v\in V(G)$. We will use the notation without subscript when the orientation $D$ is clear from context.

For a given undirected graph $G$, an orientation $D$ of $G$ is \textit{proper} if $d^-(u)\ne d^-(v)$ for all $uv\in E(G)$. An orientation with maximum in-degree at most $k$ is called a \textit{$k$-orientation}. The \textit{proper orientation number} of a graph $G$ is the minimum integer $k$ such that $G$ admits a proper $k$-orientation, denoted by $\overrightarrow{\chi}(G)$.
The existence of proper orientation was demonstrated by Borowiecki et al. in \cite{BG}, where it was shown that every graph $G$ has a proper $\Delta(G)$-orientation, where $\Delta(G)$ is the maximum degree of $G$. Later, Ahadi and Dehghan \cite{AD} introduced the concept of the proper orientation number. This parameter was widely investigated recently, for more details, we refer the reader to \cite{AD,ADM,AG,AC,AH,KM}.
Note that every proper orientation of a graph $G$ induces a proper vertex coloring of $G$. Hence, we have the following sequences of inequalities:
\begin{align}\label{e1}
\omega(G)-1\le \chi(G)-1\le \overrightarrow{\chi}(G)\le \Delta(G)
\end{align}
These inequalities are best possible since, for a complete graph $K_n$, $\omega(K_n)-1=\chi(K_n)-1=\overrightarrow{\chi}(K_n)=\Delta(K_n)=n-1.$
Ahadi and Dehghan \cite{AD} proved that it
is NP-complete to compute $\overrightarrow{\chi}(G)$ even for planar graphs. Araujo et al. \cite{AC}
strengthened this result by showing that it holds for bipartite planar graphs of
maximum degree 5.
The following two problems have received great attention by researchers.
\begin{problem}[\cite{AC}]\label{pro1}
Is there a constant $c$ such that $\overrightarrow{\chi}(G)\le c$ for every planar graph $G$?
\end{problem}
\begin{problem}[\cite{AH}]\label{pro2}
	Is there a constant $c$ such that $\overrightarrow{\chi}(G)\le c$ for every outerplanar graph $G$?
\end{problem}
\noindent Knox et al.~\cite{KM} proved that $\overrightarrow{\chi}(G)\le 5$ for a $3$-connected planar bipartite graph $G$ and Noguci~\cite{N} showed that $\overrightarrow{\chi}(G)\le 3$ for any bipartite planar graph $G$ with $\delta(G)\ge 3$.
Araujo et al. \cite{AH} proved $\overrightarrow{\chi}(G)\le 7$ for any cactus, i.e., an outerplanar graph with every 2-connected component being either an edge or a cycle and $\overrightarrow{\chi}(T)\le 4$ for any tree $T$ (see
also \cite{KM} for a short algorithmic proof).
Ai et al.~\cite{AG} proved that $\overrightarrow{\chi}(G)\le 3$ for any triangle-free, $2$-connected outerplanar graph $G$ and $\overrightarrow{\chi}(G)\le 4$ for any triangle-free, bridgless or tree-free outerplanar graph $G$. Later, Ara\'{u}jo et al. \cite{AS} studied the notion of a weighted proper orientation of graphs.

Recently Dehghan \cite{D} introduced the notion of a semi-proper orientation of graphs.
A \textit{semi-proper orientation} of a given graph $G$, denoted by $(D,w)$, is an orientation $D$ with a weight function $w: A(D)\rightarrow \mathbb{Z}_+$, such that the in-weight of any adjacent vertices are distinct, where the \textit{in-weight} of $v$ in $D$, denoted by $w^-_D(v)$, is the sum of the weights of arcs towards $v$. Let $\mu^-(D,w)$ be the maximum of $w^-_D(v)$ over all vertices $v$ of $G.$
We drop the subscript when the orientation and weight function are clear from the context. The \textit{semi-proper orientation number} of a graph $G$, denoted by $\overrightarrow{\chi}_s(G)$,
is the minimum of $\mu^-(D,w)$ over all semi-proper orientations $(D,w)$ of $G$. An \textit{optimal semi-proper orientation} is a semi-proper orientation $(D,w)$ such that $\mu^-(D,w)=
\overrightarrow{\chi}_s(G)$. 

\begin{theorem}[\cite{D}]\label{D}
	Every graph $G$ has an optimal semi-proper orientation $(D,w)$ such that the weight of each edge is one or two.
\end{theorem}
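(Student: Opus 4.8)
The plan is to exploit a simple but crucial decoupling: once the orientation $D$ is fixed, the weight of an arc $(u,v)$ contributes only to $w^-_D(v)$ and not to the in-weight of its tail. Hence the in-weights of distinct vertices are governed by disjoint sets of arcs and may be prescribed independently. In particular, if all weights are restricted to $\{1,2\}$, then for a vertex $v$ with $d^-_D(v)\ge 1$ the value $w^-_D(v)$ can be made equal to any integer in the interval $[\,d^-_D(v),\,2d^-_D(v)\,]$, while $w^-_D(v)=0$ is forced exactly when $d^-_D(v)=0$. So the theorem reduces to a purely combinatorial statement: writing $k=\overrightarrow{\chi}_s(G)$, it suffices to produce an orientation $D$ of $G$ together with a proper colouring $\phi\colon V(G)\to\{0,1,\dots,k\}$ such that $\lceil \phi(v)/2\rceil\le d^-_D(v)\le \phi(v)$ for every $v$ (with $d^-_D(v)=0$ precisely when $\phi(v)=0$); assigning each in-arc of $v$ a weight $1$ or $2$ summing to $\phi(v)$ then gives the required optimal semi-proper orientation with weights in $\{1,2\}$.

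To obtain a candidate colouring I would start from any optimal semi-proper orientation $(D_0,w_0)$ and set $\phi(v)=w^-_{D_0}(v)$. This $\phi$ is proper with maximum value $k$, and $D_0$ already witnesses the upper bound $d^-_{D_0}(v)\le\phi(v)$. I would then shrink $\phi$ greedily: keeping $D_0$ fixed, repeatedly lower $\phi(v)$ to the least value that is at least $d^-_{D_0}(v)$ and differs from the current colours of all neighbours of $v$. By the decoupling above each such step changes only the colour of $v$, preserves properness, never raises the maximum, and keeps $\phi$ realizable by $D_0$; as the total $\sum_v\phi(v)$ strictly decreases, the process halts, and afterwards $\phi(v)\le d^-_{D_0}(v)+d_G(v)\le 2d_G(v)$, so the per-vertex requirement $\lceil\phi(v)/2\rceil\le d_G(v)$ holds.

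The central remaining task is to re-orient $G$ so that the lower in-degree bounds $\lceil\phi(v)/2\rceil$ are met while the upper bounds $\phi(v)$ are retained. I would cast the existence of an orientation with $\lceil\phi(v)/2\rceil\le d^-(v)\le\phi(v)$ as feasibility of an integral flow in the edge--vertex incidence network and invoke the classical degree-constrained orientation theorem (equivalently, Hoffman's circulation theorem). Feasibility reduces to two families of cut conditions: an \emph{upper} condition, that for every $S\subseteq V(G)$ the number of edges with both ends in $S$ is at most $\sum_{v\in S}\phi(v)$, and a \emph{lower} condition, that for every $S$ the number of edges meeting $S$ is at least $\sum_{v\in S}\lceil\phi(v)/2\rceil$. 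The upper condition is immediate, since in $D_0$ every edge inside $S$ has its head in $S$, whence the number of such edges is at most $\sum_{v\in S}d^-_{D_0}(v)\le\sum_{v\in S}\phi(v)$.

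The hard part will be the lower condition, which can genuinely fail for non-optimal colourings (e.g.\ $\phi=(1,2,3)$ on a triangle forces a weight-$3$ arc). I would secure it by an extremal choice: among all semi-proper orientations with maximum in-weight at most $k$, take one minimizing the total weight $\sum_{a\in A(D)}w(a)=\sum_v\phi(v)$, and run the greedy reduction above on it. If some $S$ violated the lower condition, then $S$ would be receiving more in-weight than its incident edges can carry using weights $1$ and $2$, forcing an in-arc of weight at least $3$ on its boundary; I would then reroute along the associated augmenting structure — reversing a short alternating sequence of arcs and lowering the offending weight — to get a semi-proper orientation with the same maximum in-weight but strictly smaller total weight, contradicting minimality. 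Once both cut conditions hold, the degree-constrained orientation exists, and distributing weights $1$ and $2$ as in the first paragraph finishes the proof. I expect the bookkeeping of this rerouting — checking that it never raises the maximum in-weight nor breaks properness — to be the only delicate point.
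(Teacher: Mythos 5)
This statement is not proved in the paper at all: it is quoted from Dehghan \cite{D} (see also \cite{DH}), so there is no in-paper argument to compare against, and your proposal must stand on its own. Its skeleton is sound. The decoupling observation is correct; so is the reformulation that an optimal semi-proper orientation with weights in $\{1,2\}$ exists if and only if there are an orientation $D$ and a proper colouring $\phi$ with values in $\{0,\dots,k\}$ satisfying $\lceil\phi(v)/2\rceil\le d^-_D(v)\le\phi(v)$; the greedy colour reduction is valid; the degree-constrained orientation theorem (Frank--Gy\'arf\'as, via Hoffman) does have exactly the two families of cut conditions you state; and your verification of the upper condition is correct.

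The genuine gap is the lower cut condition, and it is not a ``delicate bookkeeping point'' but the entire mathematical content of the theorem. You correctly deduce that if some set $S$ has $\sum_{v\in S}\lceil\phi(v)/2\rceil$ larger than the number of edges meeting $S$, then the minimum-total-weight optimal orientation $(D_0,w_0)$ must contain an arc of weight at least $3$ with head in $S$. But to obtain a contradiction you must show that a minimum-total-weight optimal semi-proper orientation admits a strictly cheaper modification whenever it contains such an arc --- in other words, essentially, that the minimum-total-weight optimal orientation never needs arcs of weight $3$ or more. That claim is a restatement of the theorem being proved: if you could establish it directly, the flow machinery would be superfluous, since all weights at most $2$ give $w^-_{D_0}(v)\le 2d^-_{D_0}(v)$ and the colouring $\phi$ could then be realized on $D_0$ itself. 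The ``rerouting along a short alternating sequence'' is precisely where the difficulty lives and it is left unconstructed: reversing an arc $(v,x)$ to raise $d^-(v)$ lowers $w^-(x)$ by $w_0(vx)$, which can collide with the in-weight of a neighbour of $x$; repairing $x$ by re-weighting its in-arcs may itself require a weight exceeding $2$, or another reversal, and nothing in the sketch bounds this cascade, guarantees termination, preserves properness globally, or keeps every in-weight at most $k$. Until that exchange argument is actually carried out, the proof is circular in effect: all of the (correct) machinery reduces the theorem to an unproven claim of the same difficulty.
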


It is easy to see that $\overrightarrow{\chi}_s(G)\le \overrightarrow{\chi}(G)$. Moreover, by the definition of a semi-proper orientation, the in-weights of adjacent vertices are different.
Consequently, by (\ref{e1}), we have
\begin{align} \label{e2}
\omega(G)-1\le \chi(G)-1\le \overrightarrow{\chi}_s(G)\le \overrightarrow{\chi}(G)\le \Delta(G)
\end{align}
Dehghan \cite{D} observed that there exist graphs $G$ such that $\overrightarrow{\chi}_s(G) < \overrightarrow{\chi}(G).$
Indeed, while as observed in \cite{D}, we have $\overrightarrow{\chi}_w(T)\le 2$ for every three $T$,
there are trees $T$ with $\overrightarrow{\chi}(T)=4$ \cite{AC}.
Thus, one natural problem is to study the gap between these two parameters.
\begin{problem}[\cite{D}]\label{pro3}
	Is there any constant $c_1$ such that $\overrightarrow{\chi}(G)-\overrightarrow{\chi}_s(G)\le c_1$ for every graph $G$?
\end{problem}
In this paper, we prove a sharp upper bound for the semi-proper orientation number of cacti in Theorem \ref{cacti}, which implies that $c_1\ge 4$ if $c_1$ exist, due to the sharp upper bound $\overrightarrow{\chi}(G)\leq 7$ for cacti proved in \cite{AH}.

In \cite{D}, Dehghan showed that determining whether a given planar graph $G$ with $\overrightarrow{\chi}_s(G)=2 $ has an optimal semi-proper orientation $(D,w)$
such that the weight of each edge is one is NP-complete. He also proved that the
problem of determining the semi-proper orientation number of planar bipartite graphs is NP-hard.

	 We prove the following two results. Theorem \ref{cacti} gives a  tight bound for cacti in the weighted case. Note that this theorem and the tight bound on the proper orientation number of cacti
	 imply that $c_1\ge 4$ in Problem \ref{pro3} (provided that $c_1$ exists). While Problem \ref{pro2} remains open, we consider the problem in the weighted case. Theorem \ref{main} solves this problem.
Due to Theorem~\ref{D}, the bounds in these theorems can be achieved for optimal semi-proper orientations where every edge weight is 1 or 2.

\begin{theorem} \label{cacti}
For every cactus $G,$ we have $\overrightarrow{\chi}_s(G) \le 3$ and this bound is tight.
\end{theorem}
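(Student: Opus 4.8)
The plan is to prove the upper bound by a block-by-block argument along the block-cut tree, and to establish tightness with an explicit $9$-vertex cactus. Throughout write $c(v):=w^-_D(v)$ for the in-weight of $v$.

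For the upper bound, recall that every block of a cactus is either an edge or a cycle. I would root the block-cut tree at an arbitrary vertex $r_0$, so that each block $B$ has a unique vertex $r_B$ closest to $r_0$ (its \emph{root}); for each cut vertex $v$ exactly one block has $v$ as a non-root vertex (its \emph{parent block}), while every other block containing $v$ has $v$ as its root. I then impose the convention that in each block all edges incident to its root point away from the root. Under this convention the in-weight of any vertex $v$ is contributed entirely by its parent block, since each block having $v$ as root sends weight $0$ to $v$; hence $c(v)$ is determined once the parent block is oriented. Processing blocks in pre-order (root block first), it therefore suffices to prove a Block Lemma: given a block $B$ with root $r_B$ whose value $c(r_B)\in\{0,1,2,3\}$ is prescribed, one can orient the edges of $B$ with weights in $\{1,2\}$ so that all edges at $r_B$ point away from $r_B$, every other vertex of $B$ gets in-weight (from $B$) in $\{0,1,2,3\}$, and adjacent vertices of $B$ get distinct in-weights. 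Since all adjacencies of $G$ lie inside blocks, assembling these per-block orientations yields a semi-proper orientation with $\mu^-\le 3$. For the root block one takes $c(r_0)=0$, which is consistent because $r_0$ receives $0$ from every block containing it.

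The Block Lemma is immediate for an edge $r_Bu$: orient it $r_B\to u$ with the weight in $\{1,2\}$ different from $c(r_B)$. For a cycle $r_B=v_0,v_1,\dots,v_{n-1},v_0$ the plan is to orient $v_0\to v_1$, $v_0\to v_{n-1}$, and the remaining path forward as $v_1\to v_2\to\cdots\to v_{n-1}$. Then for $1\le i\le n-2$ the in-weight of $v_i$ equals the weight of its unique in-arc, which may be set to any value in $\{1,2\}$, while $v_{n-1}$ receives $w(v_0v_{n-1})+w(v_{n-2}v_{n-1})\in\{2,3\}$ once the weight $2+2$ is avoided. Thus $c(v_1),\dots,c(v_{n-2})$ realize either alternating $1,2$ pattern and $c(v_{n-1})$ is any value in $\{2,3\}$; a short case analysis on $c(v_0)\in\{0,1,2,3\}$ then shows the pattern and the value at $v_{n-1}$ can always be chosen so that consecutive colors differ and $c(v_1),c(v_{n-1})\ne c(v_0)$. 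This case check is the main (though routine) technical step of the upper bound: the available weights force the interior colors into $\{1,2\}$, so one must verify that the freedom at the two ends reconciles the parity of the cycle with the prescribed root color.

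For tightness I would take the cactus $G_0$ consisting of a central triangle $xyz$ with a pendant triangle $xx_1x_2$, $yy_1y_2$, $zz_1z_2$ attached at each of its vertices; here $x,y,z$ are cut vertices and the six remaining vertices have degree $2$. Suppose for contradiction that $G_0$ has a semi-proper orientation with all in-weights at most $2$; then every in-weight lies in $\{0,1,2\}$ and every weight lies in $\{1,2\}$. Since each triangle is a clique, its three in-weights are exactly $0,1,2$; in particular the two degree-$2$ vertices of each pendant triangle receive the two values of $\{0,1,2\}\setminus\{c(x)\}$, and similarly at $y,z$. The key claim is that, whatever the value of $c(x)$, the vertex $x$ receives in-weight $0$ from the central triangle: for $c(x)=0$ this is clear, while for $c(x)\in\{1,2\}$ one pendant leaf is colored $0$, hence is a source sending its arc into $x$, and tracing the in-weight this forces at $x$ within the pendant triangle shows it already equals $1$ when $c(x)=1$ and $2$ when $c(x)=2$, leaving nothing for the central edges. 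The same holds at $y$ and $z$, so all three central vertices receive in-weight $0$ from the central edges. But the three central edges contribute total in-weight at least $3>0$ to $\{x,y,z\}$, a contradiction. Hence $\overrightarrow{\chi}_s(G_0)\ge 3$, and together with the upper bound $\overrightarrow{\chi}_s(G_0)=3$. The tightness argument is thus a clean forcing argument once the pendant-triangle analysis is in place, and I expect the cycle case of the Block Lemma to be the only delicate point.
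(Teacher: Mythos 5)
Your proposal is correct. It shares the paper's overall strategy for the upper bound --- induction over the blocks of the cactus, orienting each block so that it contributes in-weight $0$ to its root, so that every vertex's in-weight is determined by its parent block alone --- but the two proofs implement the per-block step differently, and they use different tightness examples. For a cycle block, the paper appeals to a library of path lemmas (Lemmas~\ref{path1}--\ref{path5}, with separate statements for each small length and several in-weight patterns each) and does a case analysis on both the cycle length and the current in-weight of the root; you instead fix one uniform orientation for every cycle (both root edges point away from the root, the rest is a directed path around), which confines the interior in-weights to an alternating $\{1,2\}$ pattern and the last vertex to $\{2,3\}$, leaving only a four-case check on the prescribed root color $c(v_0)\in\{0,1,2,3\}$. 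That check does close in every case (when $c(v_0)=3$ take $c(v_{n-1})=2$ and the alternating pattern ending in $1$; otherwise $c(v_{n-1})=3$ is always available), so your Block Lemma is sound; your scheme is shorter and more uniform, while the paper's path lemmas earn their keep by being reused in the outerplanar theorem (Theorem~\ref{main}). For tightness, the paper uses a smaller example --- two triangles joined by a bridge, $6$ vertices --- killed by a direct case analysis of a hypothetical semi-proper $2$-orientation, whereas you use a $9$-vertex cactus (a triangle with a pendant triangle at each vertex) and a forcing-plus-counting argument: each of $x,y,z$ must absorb its entire in-weight inside its pendant triangle (the leaf colored $0$ is a source, and when $c(x)=2$ the edge from the leaf colored $1$ is forced into $x$ since that leaf already receives $1$ from the source), so the three central edges have no endpoint left to receive their weight. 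Both tightness arguments are valid; yours uses a larger graph than necessary but the contradiction is arguably cleaner, and the paper's is minimal in size.
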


\begin{theorem}\label{main}
For every outerplanar graph $G,$ we have $\overrightarrow{\chi}_s(G) \le 4$ and this bound is tight.
\end{theorem}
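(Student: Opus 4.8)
The plan is to prove the upper bound $\overrightarrow{\chi}_s(G)\le 4$ by a single greedy construction that exploits the $2$-degeneracy of outerplanar graphs together with the freedom to weight edges, and then to establish tightness by exhibiting an explicit outerplanar graph admitting no semi-proper orientation with all in-weights at most $3$. The only property of outerplanar graphs I would use for the upper bound is that every subgraph has a vertex of degree at most $2$; I would record this as the fact that outerplanar graphs are $2$-degenerate, so there is a vertex ordering $v_1,\dots,v_n$ in which each $v_i$ has at most two neighbours among $v_1,\dots,v_{i-1}$ (its \emph{back-neighbours}). It is worth noting that the upper-bound argument then uses nothing beyond $2$-degeneracy.

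First I would process the vertices in the order $v_1,\dots,v_n$, maintaining a semi-proper orientation of the graph induced by the processed vertices in which no future step ever alters an already-assigned in-weight. The device that makes this work is to orient \emph{every} back-edge of $v_i$ towards $v_i$ and to use only weights in $\{1,2\}$ (legitimate by Theorem~\ref{D}). Orienting all back-edges inward leaves the in-weights of $v_1,\dots,v_{i-1}$ untouched, so the in-weight of each vertex is fixed once and for all at the moment it is added; this is precisely the step that fails in the unweighted setting, where reversing an edge to tune an in-degree would corrupt an earlier vertex. It then remains to choose the weights so that $w^-(v_i)$ avoids the in-weights of its at most two back-neighbours. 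If $v_i$ has one back-neighbour, $w^-(v_i)$ can be made $1$ or $2$, two options against one forbidden value; if $v_i$ has two back-neighbours the weights give $w^-(v_i)\in\{2,3,4\}$, three options against at most two forbidden values. In every case a valid choice exists, and the largest in-weight ever created is $4$, yielding $\overrightarrow{\chi}_s(G)\le 4$. Correctness is routine to check: each edge $v_iv_j$ with $i<j$ is oriented into $v_j$, and the constraint $w^-(v_i)\ne w^-(v_j)$ was imposed when $v_j$ was processed.

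For tightness I would construct a specific outerplanar graph $H$ and argue that $\mu^-(D,w)\ge 4$ for every semi-proper orientation $(D,w)$. Assuming for contradiction that all in-weights lie in $\{0,1,2,3\}$, I would read the in-weight function as a proper $4$-colouring and combine it with the realizability constraints forced by weights in $\{1,2\}$: a vertex of in-weight $0$ is a source and hence pushes positive weight onto each neighbour, while a vertex of in-weight $3$ must receive at least two in-arcs. Playing these local constraints against the global identity $\sum_{v}w^-(v)=\sum_{e}w(e)\in[\,|E(H)|,\,2|E(H)|\,]$, on a graph designed so that too many high-degree vertices are forced to be simultaneously low, should produce a contradiction.

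I expect this last step to be the main obstacle. The upper bound is short because the weighting absorbs all of the difficulty, but the matching lower bound gets no help from the chromatic number (outerplanar graphs are $3$-colourable, so $\overrightarrow{\chi}_s\ge\chi-1$ only gives $2$), so tightness must come entirely from a hand-crafted gadget together with a careful analysis of how in-weights in $\{0,1,2,3\}$ can be realized around its densest vertices. Designing $H$ small enough to analyze yet rigid enough to force an in-weight of $4$, and then ruling out every orientation by a finite case check or a discharging-type count, is where the real work lies.
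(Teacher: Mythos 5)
Your upper-bound argument is correct, and it takes a genuinely different route from the paper. The paper proves $\overrightarrow{\chi}_s(G)\le 4$ via an ear-decomposition lemma for $2$-connected outerplanar graphs (Lemma~\ref{conn1}, which keeps a designated vertex at in-weight $0$ using a battery of path-orientation lemmas), followed by an induction over the block tree. Your greedy over a $2$-degeneracy ordering avoids all of that: orienting every back-edge towards the newly processed vertex freezes all previously assigned in-weights, and the weight choices give two available values $\{1,2\}$ against one forbidden value when there is one back-neighbour, and three available values $\{2,3,4\}$ against at most two forbidden values when there are two. This is sound, much shorter, and in fact proves the stronger statement that \emph{every} $2$-degenerate graph has semi-proper orientation number at most $4$; it also dispenses with the root-vertex bookkeeping that the paper's block induction needs. (One small remark: the appeal to Theorem~\ref{D} is unnecessary for this direction --- you are simply constructing a weighting with values in $\{1,2\}$, which the definition permits.)

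However, the tightness half of the theorem is a genuine gap in your proposal. The statement asserts that the bound $4$ is tight, so a complete proof must exhibit an outerplanar graph $H$ with $\overrightarrow{\chi}_s(H)\ge 4$; you offer only a strategy --- an unspecified gadget, the local constraints that in-weight-$0$ vertices are sources and in-weight-$3$ vertices need at least two in-arcs, and an unfinished global count --- and you acknowledge yourself that this is where the real work lies. This cannot be waved through: it is precisely where the paper spends most of its effort, and the paper explicitly notes that the tightness argument here is much more involved than in the cactus case. Concretely, the paper takes $H=$ UOP(4), a $24$-vertex, $45$-edge iterated triangle construction, assumes a semi-proper orientation with $\mu^-(D,w)\le 3$, and plays the identity $S=\sum_{v}w_D^-(v)=\sum_{e}w(e)\ge 45$ against an upper bound of $48$ derived from the fact that the in-weights on every triangle must be three distinct values from $\{0,1,2,3\}$; this limits the number of weight-$2$ edges to three, forces $|V_3|\ge 7$, and the remaining cases $|V_3|=8$ and $|V_3|=7$ are eliminated by propagating in-weights across edge-sharing triangles until too many weight-$2$ edges are required. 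Your sketched constraints are the right ingredients, but without a concrete graph and a completed case analysis, the theorem as stated is only half proved.
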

While the tightness proof  of the bound in Theorem \ref{cacti} is quite easy, that in Theorem \ref{main} is more involved as an optimal semi-proper orientation of a significantly larger graph is considered.

The remainder of the paper is organized as follows. We provide some definitions and simple lemmas for orientations on paths in Section~\ref{pre}. Next, we study (weighted) proper orientations of cacti and outerplanar graphs, and prove Theorems~\ref{cacti} and \ref{main} in Sections~\ref{cacti1} and \ref{outer}, respectively. 

\section{Preliminaries}\label{pre}
Let us consider briefly some graph theory terminology and notation used in this paper. For more information on blocks and ear decomposition, see e.g. \cite{BM}.

We denote a path and cycle by $P$ and $C,$ respectively, and the order of $P$ and $C$ by $|P|$ and $|C|,$ respectively. We call an edge $e$ an {\it $a$-$b$ edge} if the end points of $e$ have in-weight $a$ and $b$, respectively.

A \textit{block} of a graph $G$ is a maximal nonseparable subgraph of $G$ and a block of order $i$ is said to be an \textit{$i$-block}. Note that every $i$-block with $i\ge 3$ is a 2-connected graph, 2-block is an edge (bridge) of $G$ and 1-block is an isolated vertex of $G.$ Thus, if $G$ is connected, it has no 1-blocks.

The \textit{block tree} associated to $G$ is the tree $T(G)$ with vertex set $V(T(G))=\{v_i\colon B_i\text{ is block of }G \}\cup S$, where $S$ is the set of cut vertices of $G$, and edge set $E(T(G))=\{v_is_j\colon s_j\in B_i \}$.
Choose a block $B_0$ of $G$ as a root of $T(G)$, and run depth-first search (DFS) algorithm on $T(G)$ from $B_0$. Then we can get an ordering of blocks in $G$ as $B_0,B_1,\dots,B_p$. If a cut vertex $s_i\in B_i\cap B_j$ and $j<i$, then say $s_i$ is the \textit{root} of $B_i$.

For a subgraph $H$ of $G$, an \textit{ear} of $H$ is a non-trivial path $P$ in $G$ with end-vertices in $H$ but internal vertices not. We say an ear is \textit{attached} to the corresponding ends in $H$ and we call such pair of end-vertices \textit{active}. Especially, if the vertices of an active pair are adjacent to each other, we call the pair \textit{active edge}. It is well known that every 2-connected graph $G$ has an {\em ear decomposition} defined as follows.
\begin{itemize}
	\item Choose a cycle $C_0$ of $G$ and let $G_0=C_0$.
	\item Add an ear $P_i$ attached to an active pair $(a_i,b_i)$ of $G_i$, where $a_i\ne b_i$ and let $G_{i+1}=G_i\cup P_i$, $0\le i<k$.
	\item $G_k=G$.
\end{itemize}


Now we introduce a class of outerplanar graphs, called \textit{universal outerplanar graphs}, which will be used in our proof.
A \textit{universal outerplanar graph}, denoted by UOP($n$), is defined as follows:
\begin{itemize}
	\item UOP(1) is a triangle.
	\item Add 2-length ears to all edges of UOP(1), then we get UOP(2). The new added edges are called \textit{outeredges} of UOP(2) and the new added vertices are called \textit{outervertices} of UOP(2).
	\item UOP($k+1$) is obtained from UOP($k$) by adding 2-length ears to all outeredges of UOP($k$).
\end{itemize}

We give some lemmas for orientations on paths below, which will be used later.

\begin{lemma}[\cite{AG}] \label{path1}
	Let $P=v_1v_2\dots v_n $ be a path of length $n-1$.
	\begin{enumerate}
		\item\label{p1-0} If $n\geq 7$, then there are three semi-proper orientations with weights of all edges one such that $w^{-}(v_1)=0$ and $w^{-}(v_n)=0$ and
		\begin{enumerate}
			\item $w^{-}(v_2)=2$, $w^{-}(v_{n-2})=0$, $w^{-}(v_{n-1})=2$, and
			\item $w^{-}(v_2)=1$, $w^{-}(v_3)=2$, $w^{-}(v_{n-3})=0$, $w^{-}(v_{n-2})=2$, $w^{-}(v_{n-1})=1$, and
			\item $w^{-}(v_2)=1$, $w^{-}(v_{n-2})=0$, $w^{-}(v_{n-1})=2$, respectively.
		\end{enumerate}
		\item\label{p1-1} If $n=6$, then there are two semi-proper orientations with weights of all edges one such that $w^{-}(v_1)=0$ and $w^{-}(v_6)=0$ and
		\begin{enumerate}
			\item $w^{-}(v_2)=2$, $w^{-}(v_3)=0$, $w^{-}(v_4)=1$, $w^{-}(v_5)=2$, and
			\item $w^{-}(v_2)=1$, $w^{-}(v_3)=2$, $w^{-}(v_4)=0$, $w^{-}(v_5)=2$, respectively.
		\end{enumerate}	
		\item\label{p1-2} If $n=5$, then there are two semi-proper orientations with weights of all edges one such that $w^{-}(v_1)=0$ and $w^{-}(v_5)=0$ and
		\begin{enumerate}
			\item $w^{-}(v_2)=1,$ $w^{-}(v_3)=2,$ $w^{-}(v_4)=1,$ and
			\item $w^{-}(v_2)=2,$ $w^{-}(v_3)=0,$ $w^{-}(v_4)=2,$ respectively.
		\end{enumerate}
	    \item\label{p1-3} If $n=4$, then there exists a semi-proper orientation with weights of all edges one such that $w^{-}(v_1)=0,$ $w^{-}(v_2)=1,$ $w^{-}(v_3)=2$ and $w^{-}(v_4)=0.$
	\end{enumerate}
\end{lemma}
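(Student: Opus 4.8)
The plan is to treat the lemma as a collection of explicit constructions. Since every edge has weight one, the in-weight $w^-(v_i)$ equals the in-degree of $v_i$, so each internal vertex has in-weight in $\{0,1,2\}$, each endpoint has in-weight in $\{0,1\}$, and being semi-proper just means consecutive in-weights differ. I would encode an orientation by the directions $x_1,\dots,x_{n-1}$ of the edges, writing $x_i=R$ when $v_iv_{i+1}$ is oriented from $v_i$ to $v_{i+1}$ and $x_i=L$ otherwise, and set $r_i=1$ if $x_i=R$ and $r_i=0$ if $x_i=L$. With the convention that $[\mathcal P]$ equals $1$ when $\mathcal P$ holds and $0$ otherwise, we get $w^-(v_1)=[x_1=L]$, $w^-(v_n)=[x_{n-1}=R]$ and $w^-(v_i)=[x_{i-1}=R]+[x_i=L]$ for $2\le i\le n-1$. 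In particular the requirements $w^-(v_1)=w^-(v_n)=0$ force $x_1=R$ and $x_{n-1}=L$, which I would record first; more generally a prescribed in-weight sequence is realizable exactly when the recursion $r_i=r_{i-1}+1-w^-(v_i)$, started from $r_1=1-w^-(v_1)$, stays in $\{0,1\}$ and ends with $r_{n-1}=w^-(v_n)$.

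The backbone of every construction is the alternating in-weight pattern $\dots,2,0,2,0,\dots$ coming from alternating edge directions $R,L,R,L,\dots$; this is automatically semi-proper since consecutive entries are $0$ and $2$. For each prescribed end-behaviour I would install a short \emph{head} near $v_1$ and a short \emph{tail} near $v_n$ realizing the named in-weights, and fill the unconstrained middle with this backbone. The only real issue is parity: the alternating filler joining a head to a tail has a fixed parity of length, so for one parity of $n$ the pure backbone already matches up, while for the other parity I would splice in a single in-weight-$1$ vertex, flanked by a $0$ and a $2$ so that propriety is preserved, to shift the phase by one. This parity bookkeeping is the main, though entirely routine, obstacle; everything else is verification.

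Concretely, for Part~\ref{p1-3} ($n=4$) I would take the sequence $(0,1,2,0)$; for Part~\ref{p1-2} ($n=5$) the sequences $(0,1,2,1,0)$ and $(0,2,0,2,0)$; and for Part~\ref{p1-1} ($n=6$) the sequences $(0,2,0,1,2,0)$ and $(0,1,2,0,2,0)$, each checked directly against the recursion above. For Part~\ref{p1-0} ($n\ge 7$) I would realize the three prescribed behaviours with heads $(0,2,\dots)$, $(0,1,2,\dots)$, $(0,1,\dots)$ and tails ending in $(\dots,0,2,0)$, $(\dots,0,2,1,0)$, $(\dots,0,2,0)$ respectively, filling the free region $v_3,\dots,v_{n-3}$ with the alternating backbone and inserting one in-weight-$1$ vertex there precisely when the parity of $n$ would otherwise leave two equal values adjacent at the head--tail junction. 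In every case the proof closes by confirming two things: that the displayed sequence is realizable (the recursion stays in $\{0,1\}$ and terminates correctly), and that consecutive in-weights differ, which is immediate from the $0/2$ backbone together with the isolated $1$'s.
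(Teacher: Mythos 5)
Your proposal is correct, but note that the paper itself contains no proof of this lemma to compare against: Lemma~\ref{path1} is imported verbatim from \cite{AG}, and even the companion Lemmas~\ref{path2}--\ref{path5} are justified in the paper only by pictures of explicit orientations. Your argument is therefore a genuinely different, self-contained route. Its core device --- encoding the orientation by $r_i\in\{0,1\}$ and observing that a prescribed in-weight sequence is realizable exactly when $r_1=1-w^-(v_1)$, the recursion $r_i=r_{i-1}+1-w^-(v_i)$ stays in $\{0,1\}$, and $r_{n-1}=w^-(v_n)$ --- is a correct characterization, and it reduces each part to a finite check plus the parity bookkeeping you describe. The displayed sequences for $n=4,5,6$ all sum to $n-1$, have distinct consecutive entries, and keep every $r_i$ in $\{0,1\}$; for $n\ge 7$ the head/backbone/tail scheme does produce valid sequences for all three patterns, e.g.\ $(0,2,0,2,\dots,0,2,0)$ for (a) with $n$ odd, $(0,1,2,0,2,\dots,0,2,0,2,1,0)$ for (b), and $(0,1,2,\dots,0,2,0)$ for (c), since an in-weight-$1$ vertex inserted between a $0$ and a $2$ preserves both propriety and the recursion (the $r$-value is unchanged across a $1$), which is exactly what repairs the parity mismatch. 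Two cosmetic imprecisions, neither affecting the method: for pattern (b) the free region is $v_4,\dots,v_{n-4}$ rather than $v_3,\dots,v_{n-3}$, since its head and tail are longer; and the parity of $n$ requiring the spliced-in $1$ is not the same for all patterns (even $n$ for (a) and (b), odd $n$ for (c)). What your route buys is a checkable, picture-free argument that is uniform in $n$, where the paper offers only a citation.
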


\begin{lemma} \label{path2}
Let $P=v_1v_2v_3$ be a path with length two. Then there exist three semi-proper orientations with weights of all edges at most two such that $w^-(v_1)=w^-(v_3)=0$,
	\begin{enumerate}
		\item\label{p2-1} weights of all edges are one and $w^-(v_2)=2$.
		\item\label{p2-2} $w^-(v_2)=3$, $w(v_1v_2)=2$ and $w(v_3v_2)=1$.
		\item\label{p2-3} $w^-(v_2)=4$ and $w(v_1v_2)=w(v_3v_2)=2$.
	\end{enumerate}
\end{lemma}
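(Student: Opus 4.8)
The plan is to exploit the fact that the two boundary conditions $w^-(v_1)=w^-(v_3)=0$ completely pin down the orientation, leaving only a choice of edge weights, and then to read off the three claimed weightings directly.

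First I would note that $v_1$ and $v_3$ are the two endpoints of the path, so each is incident to exactly one edge. Demanding $w^-(v_1)=0$ forces the unique edge at $v_1$, namely $v_1v_2$, to be oriented away from $v_1$ (as the arc from $v_1$ to $v_2$); likewise $w^-(v_3)=0$ forces $v_2v_3$ to be oriented from $v_3$ to $v_2$. Hence in \emph{any} orientation satisfying the two boundary conditions both arcs point into $v_2$, and therefore
\[
w^-(v_2)=w(v_1v_2)+w(v_2v_3).
\]
This reduces the whole lemma to a choice of two weights, each of which the statement allows to be at most two.

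Next I would simply evaluate the three cases. Taking both edge weights equal to $1$ gives $w^-(v_2)=2$; taking $w(v_1v_2)=2$ and $w(v_2v_3)=1$ gives $w^-(v_2)=3$; and taking both weights equal to $2$ gives $w^-(v_2)=4$. In each case every edge weight is at most two, as required, and the three target in-weights $2,3,4$ are achieved exactly as stated.

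Finally I would verify semi-properness of each of the three orientations. The only adjacent pairs are $\{v_1,v_2\}$ and $\{v_2,v_3\}$, and in all three cases $w^-(v_1)=w^-(v_3)=0$ while $w^-(v_2)\in\{2,3,4\}$ is strictly positive; thus adjacent vertices always receive distinct in-weights. There is no genuine obstacle in this lemma: once the boundary conditions force both arcs into $v_2$, the construction is immediate and semi-properness is automatic precisely because the two leaves carry in-weight zero. The only point requiring a word of care is the uniqueness-of-orientation observation in the first step, since it is what guarantees that the three weightings exhaust the available freedom.
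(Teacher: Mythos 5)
Your proof is correct and takes essentially the same approach as the paper, whose proof consists simply of three figures exhibiting exactly these orientations (both arcs into $v_2$ with the stated weights). Your explicit verification of semi-properness and the observation that the boundary conditions force the orientation are fine additions but do not change the substance of the argument.
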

\begin{proof}
	\begin{enumerate}
		\item \includegraphics[scale=0.2]{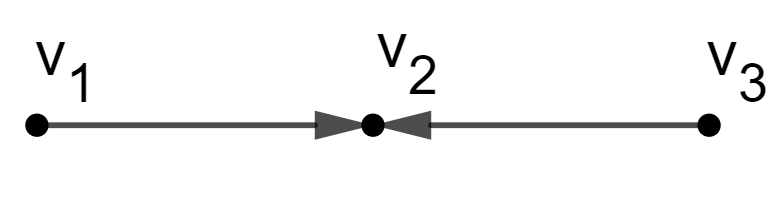}
		\item \includegraphics[scale=0.2]{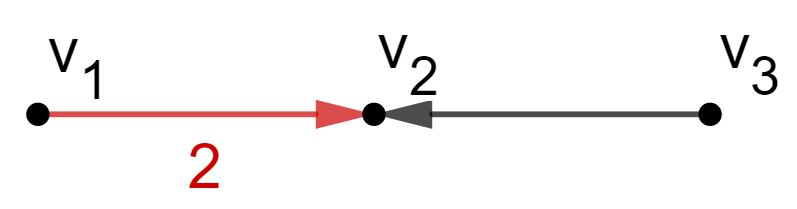}
		\item \includegraphics[scale=0.2]{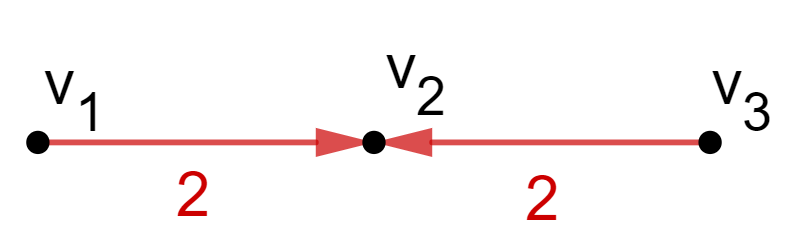}
	\end{enumerate}
\end{proof}

\begin{lemma} \label{path3}
	Let $P=v_1v_2v_3v_4$ be a path of length $3$. Then there exist two semi-proper orientations with weights of all edges at most two such that $w^-(v_1)=w^-(v_4)=0$,
	\begin{enumerate}
		\item\label{p3-1} $w^-(v_2)=2$, $w^-(v_3)=3$, $w(v_1v_2)=w(v_3v_4)=2$ and $w(v_2v_3)=1$.
		\item\label{p3-2} $w^-(v_2)=1$, $w^-(v_3)=3$, $w(v_2v_3)=2$ and $w(v_1v_2)=w(v_3v_4)=1$.
	\end{enumerate}
\end{lemma}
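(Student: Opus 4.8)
The plan is to exhibit the two orientations explicitly and verify the stated in-weights together with the semi-proper condition directly; since the path has only three edges, the entire structure is forced once the boundary in-weights $w^-(v_1)=w^-(v_4)=0$ are imposed, so the proof is a short case check rather than an argument requiring any nontrivial idea.

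First I would dispose of the boundary constraints. Because $w^-(v_1)=0$, no arc may point towards $v_1$, so the edge $v_1v_2$ is necessarily oriented $v_1\to v_2$; symmetrically, $w^-(v_4)=0$ forces $v_3v_4$ to be oriented $v_4\to v_3$. Hence both pendant edges contribute their full weight to the in-weights of the two internal vertices $v_2$ and $v_3$, and the only remaining choice is the orientation and weight of the middle edge $v_2v_3$.

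For part (\ref{p3-1}) I would orient all three edges as $v_1\to v_2$, $v_2\to v_3$, $v_4\to v_3$, and assign $w(v_1v_2)=2$, $w(v_2v_3)=1$, $w(v_3v_4)=2$. This gives $w^-(v_2)=2$ (from $v_1\to v_2$) and $w^-(v_3)=1+2=3$ (from $v_2\to v_3$ and $v_4\to v_3$), exactly as stated. For part (\ref{p3-2}) I would keep the same orientation but set $w(v_1v_2)=1$, $w(v_2v_3)=2$, $w(v_3v_4)=1$, yielding $w^-(v_2)=1$ and $w^-(v_3)=2+1=3$. All edge weights are at most two, as required.

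The only point at which a reader should pause is the semi-proper condition that adjacent vertices receive distinct in-weights. In both constructions the in-weight sequence along the path is $(0,2,3,0)$ for part (\ref{p3-1}) and $(0,1,3,0)$ for part (\ref{p3-2}), and in each case every consecutive pair of values differs, so the orientation is indeed semi-proper. There is no genuine obstacle in the argument; the substance of the lemma is simply to record these two specific weight-and-orientation patterns, in which the internal vertex $v_3$ can be pushed up to in-weight $3$ using weights bounded by two, for later use in assembling semi-proper orientations of outerplanar graphs.
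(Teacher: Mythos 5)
Your proof is correct and takes essentially the same approach as the paper: the paper's proof simply exhibits the two orientations pictorially, and your explicit constructions (both pendant edges oriented inward, middle edge oriented $v_2\to v_3$, with the stated weights) are exactly those orientations, verified directly.
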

\begin{proof}
	\begin{enumerate}
		\item \includegraphics[scale=0.2]{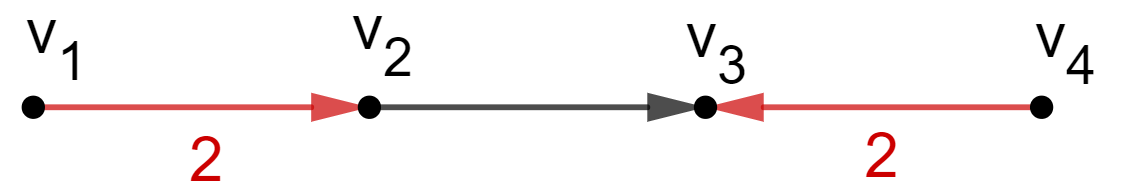}
		\item \includegraphics[scale=0.2]{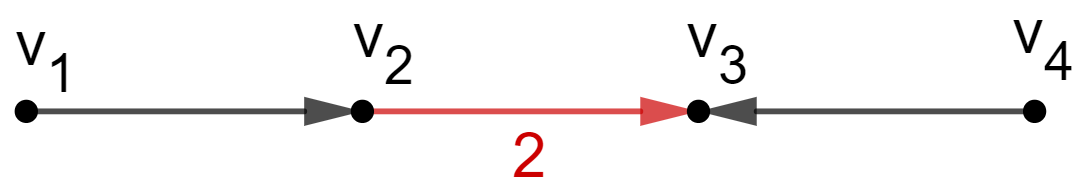}
	\end{enumerate}
\end{proof}

\begin{lemma}\label{path4}
	Let $P=v_1v_2v_3v_4v_5$ be a path with length $4$. Then there exists a semi-proper orientation with weights of all edges at most two such that $w^-(v_1)=w^-(v_3)=w^-(v_5)=0$, $w^-(v_2)=2$, $w^-(v_4)=3$, $w(v_4v_5)=2$ and other edges with weight one.
\end{lemma}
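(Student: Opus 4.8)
The plan is to exhibit one explicit orientation together with an edge weighting and then verify that it realizes all the prescribed in-weights; since $P$ has only four edges, no induction or case analysis is needed. The key preliminary observation is that the target in-weight sequence $(w^-(v_1),\dots,w^-(v_5))=(0,2,0,3,0)$ has distinct values on every pair of adjacent vertices, so \emph{any} orientation and weighting that realizes these in-weights is automatically semi-proper. Hence the entire task reduces to choosing the four arc directions and their weights so that the five prescribed in-weight values come out correctly.

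First I would let the three prescribed zeros force most of the orientation. Because $w^-(v_1)=0$, the edge $v_1v_2$ (weight one) must be oriented $v_1\to v_2$, contributing $1$ to $w^-(v_2)$. Because $w^-(v_5)=0$, the edge $v_4v_5$ (weight two, as stipulated) must be oriented $v_5\to v_4$, contributing $2$ to $w^-(v_4)$. Because $w^-(v_3)=0$, both edges incident to $v_3$ must be oriented away from it, giving the arcs $v_3\to v_2$ and $v_3\to v_4$, each of weight one. This determines the whole orientation: $v_1\to v_2$, $v_3\to v_2$, $v_3\to v_4$, $v_5\to v_4$.

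Finally I would check the two remaining positive targets by direct summation. The vertex $v_2$ receives the two weight-one arcs $v_1\to v_2$ and $v_3\to v_2$, so $w^-(v_2)=1+1=2$; the vertex $v_4$ receives the weight-one arc $v_3\to v_4$ and the weight-two arc $v_5\to v_4$, so $w^-(v_4)=1+2=3$. All edge weights are at most two, with $w(v_4v_5)=2$ and the other three edges of weight one, exactly as required. There is essentially no obstacle: the only thing to confirm is that the three zero-constraints do not demand conflicting orientations of a single edge, which they do not, and then the two in-weight sums verify themselves.
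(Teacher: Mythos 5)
Your proof is correct and matches the paper's approach: the paper's proof is simply a figure exhibiting this exact orientation ($v_1\to v_2$, $v_3\to v_2$, $v_3\to v_4$, $v_5\to v_4$ with $w(v_4v_5)=2$ and all other weights one), which you reconstruct and verify explicitly. Your added observation that the prescribed in-weights force the orientation, and that distinctness on adjacent pairs makes semi-properness automatic, is a nice touch but does not change the substance.
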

\begin{proof}
	\includegraphics[scale=0.2]{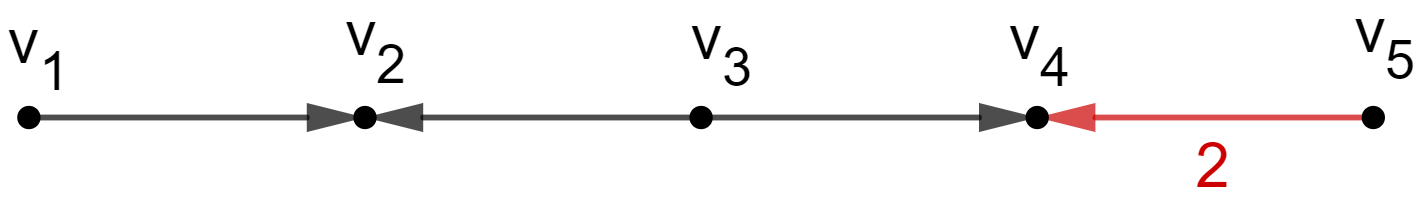}
\end{proof}

\begin{lemma}\label{path5}
	Let $P=v_1v_2v_3v_4v_5v_6$ be a path with length $5$. Then there exists a semi-proper orientation with weights of all edges at most two such that $w^-(v_1)=w^-(v_6)=0$, $w^-(v_2)=w^-(v_5)=1$, $w^-(v_3)=3$, $w^-(v_4)=2$, $w(v_2v_3)=w(v_4v_5)=2$ and  other edges with weight one.
\end{lemma}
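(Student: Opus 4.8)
The plan is to exhibit the required orientation explicitly and then verify the two conditions (the prescribed in-weights and the semi-proper property) by direct inspection. First I would observe that the boundary conditions force the orientations at the two ends: since $v_1$ has degree one and $w^-(v_1)=0$, the single edge $v_1v_2$ (of weight $1$) must be oriented $v_1\to v_2$; symmetrically, $w^-(v_6)=0$ forces $v_5v_6$ (weight $1$) to be oriented $v_6\to v_5$.

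Next I would propagate these constraints inward using the prescribed edge weights $w(v_2v_3)=w(v_4v_5)=2$ and all remaining weights equal to $1$. The condition $w^-(v_2)=1$ together with the incoming arc $v_1\to v_2$ of weight $1$ forces $v_2v_3$ to point away from $v_2$, i.e.\ $v_2\to v_3$, contributing $2$ to $v_3$. To reach $w^-(v_3)=3$ we then need one more unit of in-weight at $v_3$, so $v_3v_4$ (weight $1$) must point into $v_3$, i.e.\ $v_4\to v_3$. This makes $v_3v_4$ point out of $v_4$, so to achieve $w^-(v_4)=2$ the edge $v_4v_5$ (weight $2$) must point into $v_4$, i.e.\ $v_5\to v_4$; and this is consistent with $w^-(v_5)=1$, since at $v_5$ the only remaining incoming arc is $v_6\to v_5$ of weight $1$.

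Collecting these, the orientation is $v_1\to v_2$, $v_2\to v_3$, $v_4\to v_3$, $v_5\to v_4$, $v_6\to v_5$, with weights $1,2,1,2,1$ on the edges $v_1v_2,v_2v_3,v_3v_4,v_4v_5,v_5v_6$, respectively. The final step is verification: the resulting in-weight sequence is $\bigl(w^-(v_1),\dots,w^-(v_6)\bigr)=(0,1,3,2,1,0)$, matching the statement, and the values on the two endpoints of each edge, namely $(0,1),(1,3),(3,2),(2,1),(1,0)$, are in every case distinct, so the orientation is semi-proper. The only point requiring any care is that the inward propagation be self-consistent --- that no edge is assigned two incompatible directions --- which holds because each edge's direction is determined exactly once; hence there is no real obstacle here, only a routine finite check, and (as in Lemmas \ref{path2}--\ref{path4}) the argument can be summarized by a single figure depicting the arcs and weights.
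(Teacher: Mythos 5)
Your proposal is correct and matches the paper's approach: the paper's proof is simply a figure exhibiting exactly this orientation (arcs $v_1\to v_2$, $v_2\to v_3$, $v_4\to v_3$, $v_5\to v_4$, $v_6\to v_5$ with weights $1,2,1,2,1$), which you have written out explicitly and verified. Your additional propagation argument showing the orientation is forced by the prescribed weights is a harmless bonus beyond what the lemma requires.
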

\begin{proof}
	\includegraphics[scale=0.2]{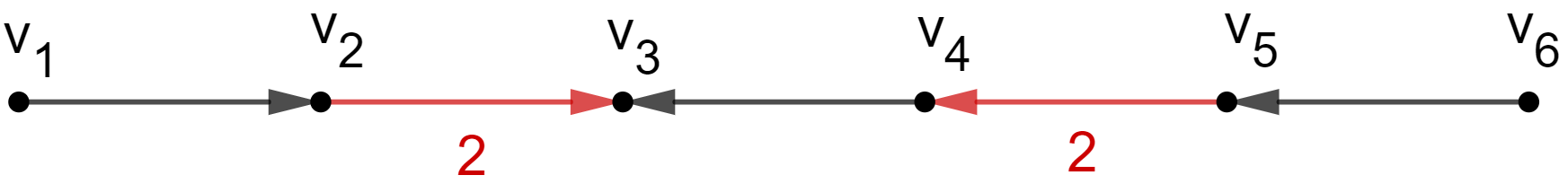}
\end{proof}

\section{Proof of Theorem~\ref{cacti}}\label{cacti1}
Since $G$ is a cactus, we can label blocks of $G$ and construct a sequence of induced subgraphs of $G$ as follows.
\begin{itemize}
	\item Choose a block $B_0$ as the root of block tree $T(G)$, i.e. $G_0=B_0$.
	\item Run DFS algorithm on $T(G)$ from $B_0$ to get an ordering of blocks in $G$.
	\item Add block $B_i$ to its root $s_i$, i.e., $G_i=G_{i-1}+B_i$, $1\le i\le k$.
	\item $G_k=G$.
\end{itemize}
Note that $B_i$ is either a $2$-block or a cycle in $G$.
	
We prove it by induction on $k$. When $k=0$, orient $B_0$ using Lemmas~\ref{path1} and  \ref{path2} or orient it arbitrarily if $B_0$ is 2-block. By the induction hypothesis, $G_{k-1}$ has a desired orientation $(D_{k-1},w)$.

Consider the case when $B_k$ is a cycle $C$. If $|C|=3$ we can apply Lemmas \ref{path3} and \ref{path1} by setting $v_1=v_4=s_k.$ If $w^-(s_k)=1$ in $(D_{k-1},w)$, then we use  Lemma~\ref{path3}-\ref{p3-1} to orient $C$ such that $w^-(v_2)=2$ and $w^-(v_3)=3$. If $w^-(s_k)=2$  in $(D_{k-1},w)$, then use Lemma~\ref{path3}-\ref{p3-2} to orient $C$ such that $w^-(v_2)=1$ and $w^-(v_3)=3$.
If $w^-(s_k)\in \{0,3\}$  in $(D_{k-1},w)$, then use Lemma~\ref{path1}-\ref{p1-3} to orient $C$ such that $w^-(v_2)=1$  and $w^-(v_3)=2$.

If $|C|=4,$ we can apply Lemma \ref{path1}-\ref{p1-2} by setting $v_1=v_5=s_k$ and using (a) if $w^-(s_k)\ne 1$ in $(D_{k-1},w)$ and (b) otherwise.
If  $|C|=5$ set $v_1=v_6=s_k$. If $w^-(s_k)=2$ in $G_{k-1}$, then use Lemma~\ref{path5}. Otherwise, use Lemma~\ref{path1}-\ref{p1-1}(a).
Now we assume $|C|\ge 6$ and set $v_1=v_n=s_k.$ If $w^-(s_k)=1$  in $(D_{k-1},w)$, then use Lemma~\ref{path1}-\ref{p1-0}(a) and otherwise Lemma~\ref{path1}-\ref{p1-0}(b).
	
Now consider the case when $B_k$ is a $2$-block $s_k v$. Then orient it from $s_k$ to $v$. If $w^-(s_k)=1$ in $(D_{k-1},w)$, then let $w(s_k v)=2$ such that $w^-(v)=2$. Otherwise, let $w(s_k v)=1$ such that $w^-(v)=1$.
	
For both cases, we have $w^-_{(D_k,w)}(s_k)=w^-_{(D_{k-1},w)}(s_k)$. This implies that $G_k$ has a  desired orientation $(D_k,w)$.

\begin{figure}[!htpb]
	\centering\includegraphics[scale=0.2]{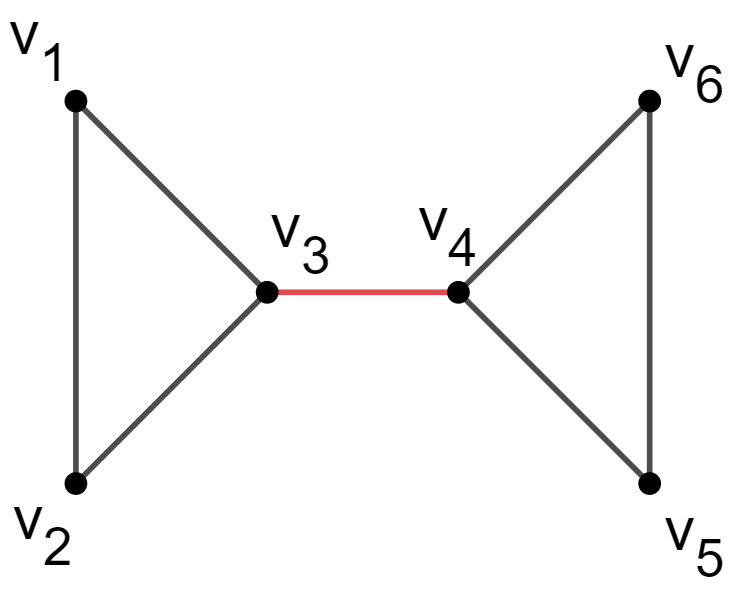}
	\caption{A tight example of Theorem~\ref{cacti}.}
	\label{gif}
\end{figure}

A tight example is given in Figure~\ref{gif}. Let $G$ be a cactus with vertex set $V(G)=\{v_1,v_2,v_3,v_4,v_5,v_6 \}$ and edge set $E(G)=\{v_1v_2,v_1v_3,v_2v_3,v_3v_4,v_4v_5,v_4v_6,v_5v_6 \}$. Suppose $G$ has a semi-proper 2-orientation $D$.
Without loss of generality, we may assume that edge $v_3v_4$ in $G$ is oriented in $D$ from $v_4$ to $v_3.$
Hence, $1\le w_D^-(v_3)\le 2.$ Since $w_D^-(v_3)\le 2,$ without loss of generality, we may assume that edge  $v_2v_3$ is oriented from $v_3$ to $v_2.$ Thus, $1\le w_D^-(v_2)\le 2.$
We cannot have $1\le w_D^-(v_1)\le 2$ as well since that would imply that there are vertices $v_i,v_j$ with $1\le i<j\le 3$ such that $w_D^-(v_i)=w_D^-(v_j).$ Hence, $w_D^-(v_1)=0,$ but then both $v_1v_2$ and $v_1v_3$ must be oriented from $v_1$ implying that $w_D^-(v_2)=w_D^-(v_3)=2,$  a contradiction.

\qed

\section{Proof of Theorem \ref{main}}\label{outer}

We start from the following:

\begin{lemma} \label{conn1}
	Let $G$ be a 2-connected outerplanar graph and let $s$ be an arbitrary vertex of $G.$ Then there exists a semi-proper orientation $(D,w)$ such that $\overrightarrow{\chi}_s(G) \le 4$
	and $w^-(s)=0$.
\end{lemma}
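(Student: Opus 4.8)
The plan is to use the ear decomposition of the $2$-connected outerplanar graph $G$ and induct on the number of ears, maintaining throughout that the distinguished vertex $s$ can be forced to have in-weight $0$. Because $G$ is outerplanar, every vertex lies on the outer face and every ear is attached to an \emph{active edge} (two adjacent vertices), so the ears we add are always paths joining two vertices that are already connected by an edge of the current graph; this is the feature that keeps the construction local. I would start the induction with a base case $G_0=C_0$, a single cycle through $s$, oriented so that $w^-(s)=0$ using the path lemmas (Lemmas~\ref{path1}--\ref{path5}) by cutting the cycle open at $s$ and treating it as a path $v_1\dots v_n$ with $v_1=v_n=s$; the various length cases are handled exactly as in the proof of Theorem~\ref{cacti}.

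For the inductive step I would assume $G_i$ has a semi-proper orientation $(D_i,w)$ with $\mu^-\le 4$ and $w^-(s)=0$, and then add an ear $P_i$ attached to an active edge $(a_i,b_i)$. Since $a_i$ and $b_i$ are adjacent and the orientation is semi-proper, their in-weights are already distinct, so the ordered pair $(w^-(a_i),w^-(b_i))$ of \emph{fixed} boundary in-weights is one of finitely many possibilities drawn from $\{0,1,2,3,4\}$. The crux is to orient the internal path of the ear and choose edge weights in $\{1,2\}$ so that (i) the two arcs of the ear incident to $a_i$ and $b_i$ do not raise $w^-(a_i)$ or $w^-(b_i)$ beyond $4$ while keeping them distinct from their new neighbors along the ear, and (ii) every internal vertex of the ear receives an in-weight in $\{0,1,2,3,4\}$ differing from both of its path-neighbors. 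The path lemmas are engineered precisely for this: they provide semi-proper orientations of short paths with prescribed end-behavior (both endpoints receiving in-weight $0$ from the path, so that the ear contributes nothing to the already-set in-weights of $a_i,b_i$), together with a menu of internal in-weight profiles, weights bounded by $2$, and maximum in-weight at most $4$.

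The main obstacle, and the step I expect to require the most care, is a clean case analysis showing that for \emph{every} admissible boundary pattern $(w^-(a_i),w^-(b_i))$ and every ear length that can occur, some lemma-supplied orientation glues in without collision. Because the ear lemmas are stated with both endpoints contributing $0$ to the endpoints' in-weight, the endpoints' in-weights are untouched, so (i) is automatic; the real content is matching the internal profile against the fixed endpoint values at the two junction vertices so that the first and last internal vertices differ from $a_i$ and $b_i$ respectively. The finite menu of profiles (e.g. internal values such as $1,2$; $2,3$; $2,0,2$; $1,2,1$; and the longer ones from Lemma~\ref{path1}) is large enough to avoid the endpoint values in all cases, but verifying this exhaustively is the delicate part. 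One subtlety specific to outerplanarity that I would flag is that an ear could in principle be attached to an active pair that is \emph{not} an edge; restricting to outerplanar $G$ guarantees we can always choose an ear decomposition whose ears are attached to active edges, which is what makes the endpoints' in-weights known to be distinct in advance and keeps the case analysis finite.

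Finally, after the last ear is added we obtain $G_k=G$ with the desired orientation, and since every step preserves $\mu^-\le 4$ and $w^-(s)=0$, the lemma follows. I would present the base case and one or two representative boundary cases in full, then remark that the remaining cases are analogous, since the path lemmas were assembled with exactly this gluing in mind.
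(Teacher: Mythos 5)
Your proposal is correct and follows essentially the same route as the paper: an ear decomposition anchored at a cycle through $s$, the observation that outerplanarity forces every ear to attach to an active edge (so the two attachment vertices already have distinct in-weights), and a case analysis over ear lengths using the path lemmas, whose orientations contribute $0$ to both endpoints so the boundary in-weights are never disturbed. The exhaustive matching of internal profiles against the fixed endpoint values that you defer is precisely the content of the paper's Cases 1--5, and the lemma menu does suffice exactly as you claim.
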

\begin{proof}
Since $G$ is $2$-connected, recall that we can construct $G$ by the process of ear decomposition as follows.
\begin{itemize}
	\item Choose a cycle $C_0$ containing $s$ and let $G_0=C_0$.
	\item Add an ear $P_i$ attached to an active pair $(a_i,b_i)$ of $G_i$, where $a_i\ne b_i$ and let $G_{i+1}=G_i\cup P_i$, $0\le i<k$.
	\item $G_k=G$.
\end{itemize}
Note that $a_i$ is adjacent to $b_i$ as $G$ is outerplanar.
	
We prove this lemma by induction on $k$. When $k=0$, orient $G_0$ using Lemma~\ref{path1} or orient it in any other proper way such that  $w^-(s)=0$.
By the induction hypothesis, $G_{k-1}$ has a desired orientation $(D_{k-1},w)$. Assume that $e=\{a_k,b_k\}$ is an active edge of $P_k=v_1v_2 \dots v_n$ and assume without loss of generality that $a_k=v_1$, $b_k=v_n$, $w^{-}(a_k)<w^{-}(b_k)$ in $(D_{k-1},w)$. We consider the following cases. 

{\bf Case 1} $|P_k|=3$. If $e$ is a 2-3 edge in $(D_{k-1},w)$, then use Lemma~\ref{path2}-\ref{p2-3} to orient $P_k$ such that $w^-(v_2)=4$. If neither $a_k$ nor $b_k$ has in-weight 2, then use Lemma~\ref{path2}-\ref{p2-1} to orient $P_k$ such that $w^-(v_2)=2$. If neither $a_k$ nor $b_k$ has in-weight 3, then use Lemma~\ref{path2}-\ref{p2-2} to orient $P_k$ such that $w^-(v_2)=3$.

{\bf Case 2} $|P_k|=4$. If $w^-(a_k)=1$ or $w^-(b_k)=2$ in $(D_{k-1},w)$, then use Lemma~\ref{path1}-\ref{p1-3} to orient $P_k$ such that $w^-(v_2)=2$ and $w^-(v_3)=1$. If not, then use Lemma~\ref{path1}-\ref{p1-3} to orient $P_k$ such that $w^-(v_2)=1$ and $w^-(v_3)=2$.

{\bf Case 3}  $|P_k|=5$. If $e$ is a 1-2 edge in $(D_{k-1},w)$, then use Lemma~\ref{path4} to orient $P_k$ such that $w^-(v_2)=2$ and $w^-(v_4)=3$. If neither $a_k$ nor $b_k$ has in-weight 1, then use Lemma~\ref{path1}-\ref{p1-2}(a) to orient $P_k$ such that $w^-(v_2)=w^-(v_4)=1$. If neither $a_k$ nor $b_k$ has in-weight 2, then use Lemma~\ref{path1}-\ref{p1-2}(b) to orient $P_k$ such that $w^-(v_2)=w^-(v_4)=2$.

{\bf Case 4}  $|P_k|=6$. If $w^-(a_k)=1$ or $w^-(b_k)=2$ in $(D_{k-1},w)$, then use Lemma~\ref{path1}-\ref{p1-1}(b) to orient $P_k$ such that $w^-(v_2)=2$ and $w^-(v_5)=1$. If not, then use Lemma~\ref{path1}-\ref{p1-1}(b) to orient $P_k$ such that $w^-(v_2)=1$ and $w^-(v_5)=2$.

{\bf Case 5}  $|P_k|\ge 7$. If $w^-(a_k)=1$ or $w^-(b_k)=2$ in $(D_{k-1},w)$, then use Lemma~\ref{path1}-\ref{p1-0}(c) to orient $P_k$ such that $w^-(v_2)=2$ and $w^-(v_{n-1})=1$. If not, then use Lemma~\ref{path1}-\ref{p1-0}(c) to orient $P_k$ such that $w^-(v_2)=1$ and $w^-(v_{n-1})=2$.	
	
For all cases above, the in-weights of $a_k$ and $b_k$ in $(D_k,w)$ are the same as that in $(D_{k-1},w)$. This implies that $G_k$ has a desired orientation $(D_k,w),$ where in particular $w^-(s)=0$.
\end{proof}

To complete the proof of Theorem \ref{main}, it remains to consider the case when $G$ is connected but not 2-connected.
Let $B_0,B_1,\dots ,B_k$ be a list of blocks of $G$ such that for every $i\in \{0,1,2,\dots , k\},$ the subgraph $G_i$ of $G$ induced by the union of blocks  $B_0,B_1,\dots ,B_i$ is connected.
Such a list can be obtained e.g. by using DFS on $T(G)$ as described in the beginning of the previous section. Let $s$ be the root of  $B_k.$
We  prove the following extension of the theorem by induction on $i\in \{0,1,\dots ,k\}$:

{\em For every  $i\in \{0,1,\dots ,k\}$, $G_i$ has a semi-proper orientation $(D_i,w)$ such that $\mu^-(D_i,w)\le 4$ and if $s\in V(G_i)$ then $w^-(s)=0$.}

\vspace{1mm}

If $B_0$ is a 2-connected outerplanar graph, then by Lemma~\ref{conn1}, $G_0$ has a semi-proper orientation $(D_0,w)$ such that $\mu^-(D_0,w)\le 4$ and $w^-(s)=0$ if $s\in V(G_0)$.
If $B_0$ is an edge, then we orient the edge from $s$ to ensure that $w^-(s)=0$ if $s\in V(G_0)$ and arbitrarily, otherwise.
By the induction hypothesis, let $G_{i-1}$ have a desired orientation $(D_{i-1},w)$ such that $w^-(s)=0$ if $s\in V(G_{i-1}).$

First consider the case when $B_i$ is a 2-connected outerplanar graph. By Lemma~\ref{conn1}, $B_i$ has a semi-proper orientation $(D',w)$ such that $\mu^-(D',w)\le 4$ and $w^-(s)=0$
if $s\in V(B_i).$ Thus,  $(D',w)$ does not add the in-weight of $s$ and $w^-(s)=0$ in the resulting semi-proper orientation of $G_i$ provided $s\in V(G_i).$
If $B_i$ is an edge $e$ then orient it from $s$ if $s$ is an end-vertex of $e$ and arbitrarily, otherwise. Then we obtain a desired orientation as above.

Now we show the tightness of the bound. We will have $G$=UOP(4) as a tight example, which is depicted in Figure~\ref{f3}. Suppose $\overrightarrow{\chi}_s(G) \le 3.$
Since $G$ contains a $K_3$-subgraph, $\overrightarrow{\chi}_s(G) = 3.$ Let $D$ be an optimal semi-proper orientation of $G$ and let $V_i$ be the set of vertices in $D$
with in-weight $i\in \{0,1,2,3\}.$

Note that the vertices of $G$ can be partitioned into three size-8 sets $A,B,C$ such that every $K_3$-subgraph of $G$ has one vertex $a\in A$, $b\in B$ and $c\in C$ as depicted in Figure~\ref{f3}.
(In other words, $A,B,C$ is a proper 3-coloring of $G.$)
Let $S=\Sigma_{v\in V(G)}w_D^-(v).$ We have $S\ge \Sigma_{v\in V(G)}d_G(v)=45.$ For every $K_3$-subgraph of $G$ with vertices $a,b,c$ we have
$$\{w^-(a),w^-(b),w^-(c)\}\in \{\{1,2,3\}, \{0,2,3\}, \{0,1,3\}, \{0,1,2\}\}. $$
Thus, $S\le 8(1+2+3)=48$ implying that the gap between the upper bound and lower bound of $S$ is 3. Hence, $G$ has at most three edges of weight 2 in $D.$
By the lower bound, $|V_3|\ge 7.$

Suppose  $|V_3|=8.$ By propagation of in-weights from one $K_3$-subgraph to another $K_3$-subgraph sharing an edge with the former,
we will get four outervertices of in-weight 3 implying that $G$ has four edges of weight 2 in $D,$ a contradiction.
Thus, $|V_3|=7.$ If either $|V_1|<8$ or $|V_2|<8$ then $S<45$ implying that $|V_1|=|V_2|=8$ and $S=45.$ Hence,
all edges of $G$ have weight 1 in $D.$ However, by the propagation of in-weights, we will conclude that at least three
outervertices have in-weight 3 implying that $G$ has three edges of weight 2 in $D,$ a contradiction.
\qed

\begin{figure}[!htpb]
	\centering\includegraphics[scale=0.2]{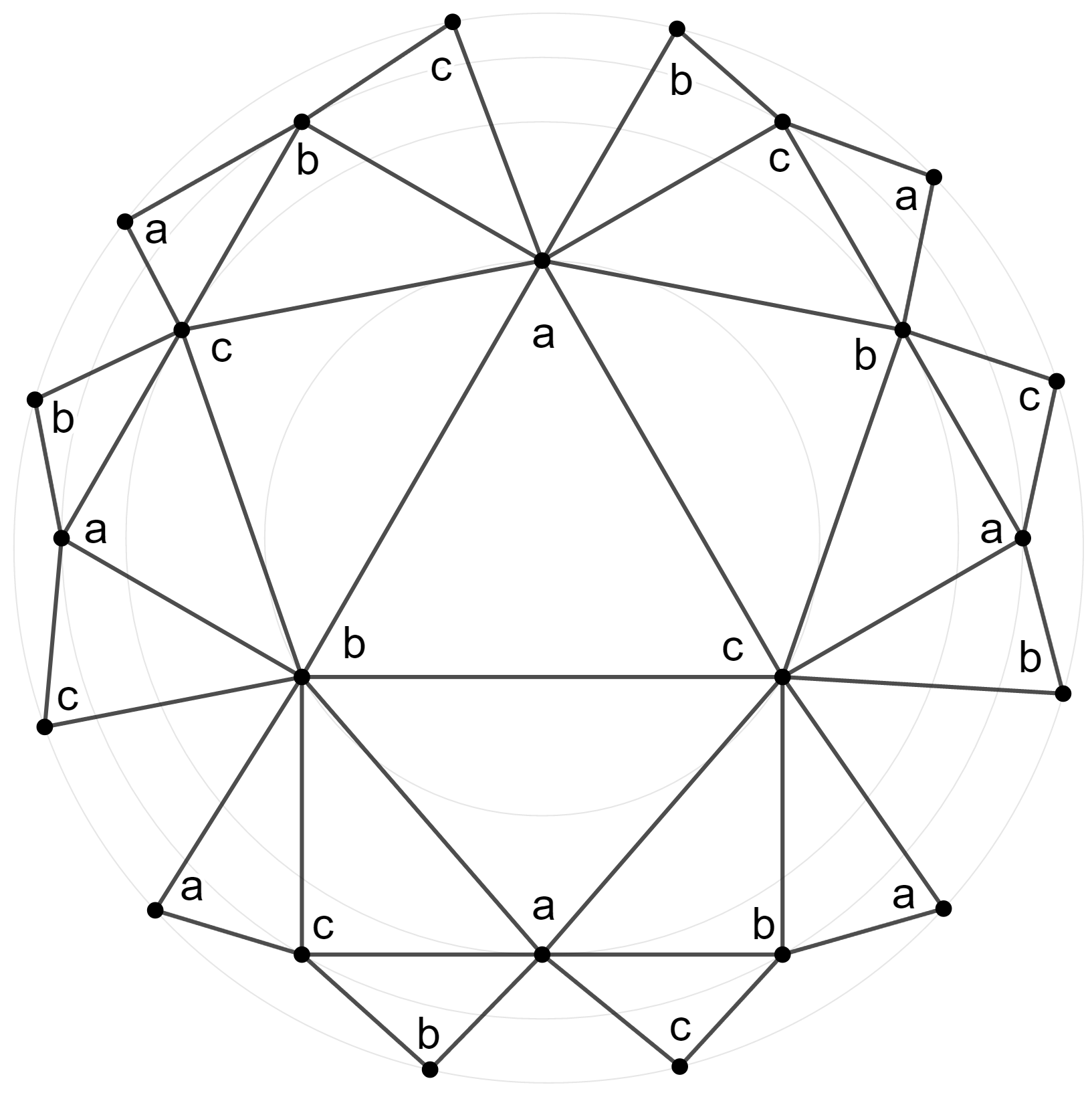}
	\caption{Optimal semi-proper orientation $D$ of UOP(4).}
	\label{f3}	
\end{figure}


\paragraph{Acknowledgments.}Shi and Taoqiu are supported by the National Natural Science Foundation of China (No. 11922112).

\end{document}